\documentclass[reqno]{amsart}

\usepackage{amscd, amssymb, amsthm, mathrsfs, amsmath, amsrefs}
\usepackage[all]{xy}

\numberwithin{equation}{section}

\theoremstyle{plain}

\newtheorem{lem}[equation]{Lemma}
\newcommand{\comment}[1]{}

\theoremstyle{definition}
\newtheorem{defi}[equation]{Definition}

\theoremstyle{remark}

\newtheorem{rem}[equation]{Remark}

\newtheorem*{ack}{Acknowledgements}
\theoremstyle{definition}

\numberwithin{equation}{section}

\usepackage{tikz}
\usetikzlibrary{matrix,arrows}

\newcommand{\sch}{\mathbf{Sch}_k}

\newcommand{\dgcat}{\mathbf{dgCat}_k}


\newcommand{\perf}{\mathrm{Perf}}
\newcommand{\Dperf}{\mathbf{D}_{\mathrm{perf}}}
\newcommand{\Perf}{\mathbf{Perf}}
\newcommand{\ac}{\mathrm{Ac}}

\newcommand{\Oo}{\mathcal{O}}

\newcommand{\fS}{\mathfrak{S}}
\newcommand{\fT}{\mathfrak{T}}

\newcommand{\op}{\mathrm{op}}

\newcommand{\Z}{\mathbb{Z}}

\DeclareMathOperator{\Spec}{Spec}

\newcommand{\Op}{\mathrm{Op}}
\newcommand{\scrD}{\mathscr{D}}
\newcommand{\scrC}{\mathscr{C}}
\newcommand{\scrF}{\mathscr{F}}
\newcommand{\Pf}{\mathcal{P}_f}
\newcommand{\im}{\mathrm{im}\hspace{0.1em}}
\newcommand{\fU}{\mathfrak{U}}
\newcommand{\catk}{\mathbf{Cat}_k}
\newcommand{\Mod}{\mathbf{Mod}}

\renewcommand{\L}{\mathbf{L}}

\begin{document}
\title[A dg--enhancement of the derived category of perfect complexes]{A strictly-functorial and small dg--enhancement of the derived category of perfect complexes}
\author{Emanuel Rodr\'\i guez Cirone}
\email{ercirone@dm.uba.ar}
\address{Dep. Matem\'atica-IMAS, FCEyN-UBA\\ Ciudad Universitaria Pab 1\\
1428 Buenos Aires\\ Argentina}
\begin{abstract}
Let $k$ be a commutative noetherian ring. We construct a strictly--functorial presheaf of small dg--categories over $k$ on the category of $k$--schemes of finite type, which gives dg--enhancements of the derived categories of perfect complexes.
\end{abstract}

\maketitle

\section{Introduction}

Let $k$ be a commutative noetherian ring and let $\dgcat$ be the category of small dg--categories over $k$ and dg--functors; see \cite{kellericm} for background on dg--categories. Let $\sch$ be the category of $k$--schemes of finite type. For $\fS\in\sch$ let $\Dperf(\fS)$ denote the derived category of perfect complexes on $\fS$; see \cite{tt}*{Section 2} for the definition of perfect complex and \cite{tt}*{1.9.6} for the derived category. We construct a strictly--functorial presheaf of small dg--categories on $\sch$, $\fS\mapsto\Perf_\fS$, such that $H^0(\Perf_\fS)$ is naturally equivalent to $\Dperf(\fS)$. Moreover, every morphism $f:\fT\to\fS$ induces a functor $H^0(\Perf_\fS)\to H^0(\Perf_\fT)$ that identifies with the left derived inverse image functor $\L f^*:\Dperf(\fS)\to\Dperf(\fT)$.

The existence of a strictly--functorial and small dg--enhancement of $\Dperf(\fS)$ is well--known to experts \cite{chsw}*{Example 2.7} and we make no claim to originality. However, a detailed construction of it seems to be difficult to find in the literature and this article tries to fill in this gap. Our approach is more explicit than the one suggested in \cite{chsw}*{Example 2.7} and it avoids the use of Grothendieck universes to guarantee the smallness of the dg--categories involved.

\section{Construction of the presheaf $\Perf_?:\sch^\op\to\dgcat$}

For simplicity, throughout the word \emph{scheme} means $k$--scheme of finite type.

\subsection{The $\Oo_\fS$--modules $\Oo_{\fS,V}$}\label{subsec:extensionbyzero}

Let $\fS$ be a scheme and let $i_V:V\subseteq\fS$ be the inclusion of an open subset. Let $\Mod(\fS)$ denote the category of $\Oo_\fS$--modules. The inverse image functor $i_V^*:\Mod(\fS)\to\Mod(V)$ is right adjoint to the \emph{extension by zero} functor $(i_V)_!$. Put $\Oo_{\fS,V}:=(i_V)_!\Oo_V$. For any $\Oo_\fS$--module $M$ we have an isomorphism
\[\sigma:\hom_\fS(\Oo_{\fS,V},M)\cong \hom_V(\Oo_V,M|_V)\cong \Gamma(V,M)\]
that is natural in $M$. Now let $f:\fT\to\fS$ be a morphism of schemes and let $N$ be an $\Oo_\fT$--module. The isomorphism
\begin{align*}
\begin{aligned}
\hom_\fT(\Oo_{\fT,f^{-1}V},N) & \overset{\sigma}\cong\Gamma(f^{-1}V,N) \\
& = \Gamma(V,f_*N) \\
& \overset{\sigma}\cong\hom_\fS(\Oo_{\fS,V},f_*N) \\
& \cong\hom_\fT(f^*\Oo_{\fS,V},N)
\end{aligned}
\end{align*}
is natural in $N$, and as such it is induced by a unique isomorphism of $\Oo_\fT$--modules $\theta_{f,V}:f^*\Oo_{\fS,V}\to\Oo_{\fT,f^{-1}V}$.

\subsection{Rectification of the inverse image functor on the modules $\Oo_{\fS,V}$} Any morphism $f:\fT\to\fS$ induces an inverse image functor $f^*:\Mod(\fS)\to\Mod(\fT)$. However, if $g:\fU\to \fT$ is another morphism and $M$ is an $\Oo_\fS$--module, the modules $g^*f^*M$ and $(fg)^*M$ are not the same but only naturally isomorphic; we will need an equality on the nose in order to define the presheaf $\Perf_?$.

Let $\fS$ be a scheme and let $\Op(\fS)$ be the poset of open subsets of $\fS$. Let $\scrF_\fS$ be the full subcategory of $\Mod(\fS)$ whose objects are the modules $\Oo_{\fS,V}$ with $V\in \Op(\fS)$. Notice that $\scrF_{\fS}$ is a small category. For every morphism $f:\fT\to \fS$ we will define a functor $f^\star:\scrF_\fS\to\scrF_\fT$ such that, in the situation of the previous paragraph, $g^\star f^\star M=(fg)^\star M$. For an open subset $V\subseteq\fS$ put $f^\star \Oo_{\fS,V}:= \Oo_{\fT,f^{-1}V}$; this defines $f^\star$ on the objects. The definition of $f^\star$ on the morphisms is given by
\begin{align*}
\begin{aligned}
f^\star:\hom_\fS(\Oo_{\fS,V},\Oo_{\fS,W}) &\to\hom_\fT(\Oo_{\fT,f^{-1}V},\Oo_{\fT,f^{-1}W}) \\
\varphi &\mapsto \theta_{f,W}\circ f^*(\varphi)\circ \theta_{f,V}^{-1}
\end{aligned}
\end{align*}
where the morphisms $\theta$ are the ones defined in \ref{subsec:extensionbyzero}.

\begin{lem}
Let $\catk$ denote the category of small $k$-linear categories. Then the definitions above give rise to a functor $\scrF_?:\sch^\op\to\catk$.
\end{lem}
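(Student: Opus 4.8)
The plan is to verify directly that $\scrF_?$ respects identities and composition, the only two conditions needed for a (contravariant) functor, since the assignment on objects and morphisms has already been spelled out. For a scheme $\fS$, the identity $\mathrm{id}_\fS$ has $(\mathrm{id}_\fS)^{-1}V=V$, so $\mathrm{id}_\fS^\star$ is the identity on objects of $\scrF_\fS$; on morphisms one must check that $\theta_{\mathrm{id}_\fS,V}:\mathrm{id}_\fS^*\Oo_{\fS,V}\to\Oo_{\fS,V}$ is the identity (or at least the canonical isomorphism $\mathrm{id}_\fS^*M\cong M$). This follows from the defining property of $\theta$: it is induced by a chain of natural isomorphisms which, when $f=\mathrm{id}_\fS$, is visibly the identity on $\hom_\fS(\Oo_{\fS,V},M)\cong\Gamma(V,M)$. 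Hence $\mathrm{id}_\fS^\star(\varphi)=\theta_{\mathrm{id},W}\circ\mathrm{id}^*(\varphi)\circ\theta_{\mathrm{id},V}^{-1}=\varphi$, so $\mathrm{id}_\fS^\star=\mathrm{id}_{\scrF_\fS}$.

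Next I would treat composition. Given $g:\fU\to\fT$ and $f:\fT\to\fS$, on objects we have $g^\star f^\star\Oo_{\fS,V}=g^\star\Oo_{\fT,f^{-1}V}=\Oo_{\fU,g^{-1}f^{-1}V}=\Oo_{\fU,(fg)^{-1}V}=(fg)^\star\Oo_{\fS,V}$, using $(fg)^{-1}V=g^{-1}(f^{-1}V)$, so the two functors agree on the nose on objects. On morphisms, unwinding the definitions, $g^\star f^\star(\varphi)=\theta_{g,f^{-1}W}\circ g^*(\theta_{f,W}\circ f^*(\varphi)\circ\theta_{f,V}^{-1})\circ\theta_{g,f^{-1}V}^{-1}$, while $(fg)^\star(\varphi)=\theta_{fg,W}\circ(fg)^*(\varphi)\circ\theta_{fg,V}^{-1}$. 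Using naturality of the comparison isomorphism $c_{f,g}:g^*f^*\Rightarrow(fg)^*$ (applied to $\varphi$) to rewrite $g^*f^*(\varphi)$, the claim reduces to the cocycle identity
\[
\theta_{fg,V}=\theta_{g,f^{-1}V}\circ g^*(\theta_{f,V})\circ (c_{f,g})_{\Oo_{\fS,V}}^{-1}
\]
for every open $V\subseteq\fS$, i.e.\ that the $\theta$'s are compatible with composition of pullbacks. This in turn I would prove by the same Yoneda-type argument used to define $\theta$ in \ref{subsec:extensionbyzero}: both sides are isomorphisms $(fg)^*\Oo_{\fS,V}\to\Oo_{\fU,(fg)^{-1}V}$, and it suffices to check they induce the same natural transformation of functors $\hom_\fU(-,N)$ in $N$; tracing through the adjunction isomorphisms $\sigma$ and the identifications $\Gamma(W,(fg)_*N)=\Gamma(W,f_*g_*N)$ one sees both composites correspond to the identity chain, hence coincide.

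The main obstacle I expect is purely bookkeeping: carefully matching up the two-step pullback comparison $c_{f,g}$ with the iterated use of $\theta$, and making sure the adjunction isomorphism $\sigma$ for the composite $fg$ is genuinely the composite of the $\sigma$'s for $f$ and $g$ (equivalently, that $f_*g_*=(fg)_*$ strictly and that the unit/counit data compose as expected). Since all the functors $(i_V)_!$, $i_V^*$, $f_*$, $f^*$ in play are honest functors between honest categories of modules and the pushforward composes strictly, $f_*g_* = (fg)_*$ on the nose, so the only genuinely non-strict piece is the inverse-image side, which is exactly what $\theta$ and the cocycle identity above are designed to tame. Once the cocycle identity is established the functoriality is immediate. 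I would therefore organize the proof as: (1) recall $\sigma$ and its compatibility with $f_*g_*=(fg)_*$; (2) prove $\theta_{\mathrm{id},V}=\mathrm{id}$ and the cocycle identity for $\theta$ via the characterization by naturality; (3) deduce $\mathrm{id}^\star=\mathrm{id}$ and $g^\star f^\star=(fg)^\star$, completing the verification that $\scrF_?:\sch^\op\to\catk$ is a functor.
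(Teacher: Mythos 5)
Your proposal is correct and follows essentially the same route as the paper: the composition check is reduced to exactly the paper's cocycle identity \eqref{eq:relationupperstar} (your $c_{f,g}^{-1}$ is the paper's $\alpha$), and it is verified the same way, by applying $\hom_\fU(-,N)$ and tracing through the adjunction isomorphisms $\sigma$ together with $f_*g_*=(fg)_*$. Your explicit treatment of the identity morphism via $\theta_{\mathrm{id},V}$ is a small addition the paper leaves implicit, and it is handled correctly.
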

\begin{proof}
Let $f:\fT\to\fS$ and $g:\fU\to\fT$ be morphisms, let $f^*$ and $g^*$ be the usual inverse image functors on modules, and let $\alpha:(fg)^*\to g^*f^*$ be the natural isomorphism induced by the adjunctions between inverse and direct images of modules. Let $\varphi\in\hom_\fS(\Oo_{\fS,V},\Oo_{\fS,W})$. In order to prove the equality $g^\star(f^\star\varphi)=(fg)^\star\varphi$, it suffices to show that
\begin{equation}\label{eq:relationupperstar}\theta_{g,f^{-1}W}\circ g^*(\theta_{f,W})\circ \alpha_{\Oo_{\fS,W}}=\theta_{fg,W}.\end{equation}
One way to prove \eqref{eq:relationupperstar} is to show that both morphisms induce the same function upon applying the functor $\hom_\fU(?,M)$ for any $\Oo_\fU$--module $M$; this is straightforward from the definitions.
\end{proof}

\begin{rem}
By definition of $f^\star$, the morphisms $\theta_{f,V}$ defined in \ref{subsec:extensionbyzero} assemble into a natural isomorphism between the restriction of $f^*:\Mod(\fS)\to\Mod(\fT)$ to $\scrF_\fS$ and the composite functor $f^\star:\scrF_\fS\to\scrF_\fT\subseteq \Mod(\fT)$.
\end{rem}

\subsection{Resolution by direct sums of $\Oo_{\fS,V}$} It is well known that the $\Oo_{\fS,V}$ are flat $\Oo_\fS$--modules, and that every $\Oo_\fS$--module is a quotient of a direct sum of these with varying $V$ \cite{hart}*{II, Proposition 1.2}. We will use a version of this last fact that keeps track of some cardinality issues. From now on, $\kappa$ is an infinite cardinal such that $\kappa\geq |k|$. We begin with the following easy observation.

\begin{lem}\label{lem:card}
Let $\fS\in\sch$. Then $|\Op(\fS)|\leq \kappa$.
\end{lem}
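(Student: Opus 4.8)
The claim is that $|\Op(\fS)| \leq \kappa$ for any $k$-scheme $\fS$ of finite type, where $\kappa$ is an infinite cardinal with $\kappa \geq |k|$. The strategy is to reduce to a bound on the cardinality of the underlying topological space of $\fS$ and then bound the number of its open subsets. Since $\fS$ is of finite type over $k$, it admits a finite affine open cover $\fS = \bigcup_{i=1}^n \Spec A_i$, with each $A_i$ a finitely generated $k$-algebra. An open subset of $\fS$ is determined by the tuple of its intersections with the $\Spec A_i$, so $|\Op(\fS)| \leq \prod_{i=1}^n |\Op(\Spec A_i)|$; hence it suffices to treat the affine case $\fS = \Spec A$ with $A$ finitely generated over $k$.

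For the affine case, I would first bound $|A|$: a finitely generated $k$-algebra is a quotient of a polynomial ring $k[x_1,\dots,x_m]$, and $|k[x_1,\dots,x_m]| \leq \max(|k|,\aleph_0) \leq \kappa$ (a polynomial is a finite tuple of coefficients indexed by a countable monomial set, so the cardinality is at most $|k|^{\aleph_0}$-many... more carefully, it is a countable union of finite-support functions, giving $\max(|k|,\aleph_0)$). Thus $|A| \leq \kappa$. Now an open subset of $\Spec A$ is the complement of $V(I)$ for an ideal $I$, and every such closed set equals $V(\sqrt{I})$; moreover $\Spec A$ is Noetherian, so every ideal is finitely generated, hence every closed subset is $V(a_1,\dots,a_r)$ for finitely many $a_j \in A$. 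Therefore the closed subsets — equivalently the open subsets — are indexed by a subset of the finite tuples from $A$, so $|\Op(\Spec A)| \leq |A|^{<\omega} = \max(|A|,\aleph_0) \leq \kappa$ since $\kappa$ is infinite. Combining with the finite product over the affine cover, and using that $\kappa$ is infinite so finite products of $\kappa$ stay $\leq \kappa$, gives $|\Op(\fS)| \leq \kappa$.

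The only mildly delicate point — really the only thing to state with any care — is the cardinal arithmetic: that $|k[x_1,\dots,x_m]| \leq \max(|k|,\aleph_0)$ and that a finite product of copies of the infinite cardinal $\kappa$ is again $\leq \kappa$. These are standard facts ($\lambda^{<\omega} = \lambda$ and $\lambda \cdot \lambda = \lambda$ for infinite $\lambda$), and the hypothesis $\kappa \geq |k|$ together with $\kappa$ infinite (so $\kappa \geq \aleph_0$) is exactly what is needed to absorb the countable contribution from the polynomial monomials. I do not anticipate a genuine obstacle here; the proof is a routine Noetherian-plus-cardinal-counting argument, and I would keep it to a few lines.
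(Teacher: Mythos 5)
Your proof is correct and follows essentially the same route as the paper: reduce to the affine case via a finite affine open cover, bound $|A|\leq\kappa$ for a finitely generated $k$-algebra, and count opens by finite tuples of elements of $A$ (the paper phrases this as finite unions of distinguished opens $D(f)$, which is the complement of your $V(a_1,\dots,a_r)$, so the counting is identical). No gaps; if anything your write-up is more careful about the cardinal arithmetic than the paper's.
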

\begin{proof}
We may assume that $\fS$ is an affine scheme, since it is a finite union of affine open subschemes. Suppose that $\fS=\Spec A$. Notice that $|A|\leq\kappa$ since $A$ is a $k$--algebra of finite type. We have
\[|\Op(\fS)|=|\left\{ D(f):f\in A\right\}|\leq |A|\leq\kappa\]
where the equality on the left follows from the fact that every open subset of $\fS$ is a finite union of distinguished open subsets.
\end{proof}

\begin{lem}\label{lem:keycondition}
Let $J$ be an index set of cardinality $\kappa$. Let $M$ be an $\Oo_\fS$--module such that $|M(U)|\leq \kappa$ for all $U\in\Op(\fS)$. Let $N$ be an $\Oo_\fS$--module and let $\pi:N\twoheadrightarrow M$ be an epimorphism. Then there exists a function $V_?:J\to\Op(\fS)$ and a morphism $\bigoplus_{j\in J}\Oo_{\fS,V_j}\to N$ such that the composite $\bigoplus_{j\in J}\Oo_{\fS,V_j}\to M$ is an epimorphism.
\end{lem}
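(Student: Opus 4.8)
The plan is to revisit the classical proof that every $\Oo_\fS$--module is a quotient of a direct sum of modules of the form $\Oo_{\fS,V}$ (\cite{hart}*{II, Proposition 1.2}), but lifting the covering morphisms through $\pi$ and keeping track of cardinalities. Two ingredients carry the argument. The first is the natural isomorphism $\sigma\colon\hom_\fS(\Oo_{\fS,V},P)\simeq\Gamma(V,P)$ from \ref{subsec:extensionbyzero}; I will write $\phi_s\colon\Oo_{\fS,V}\to P$ for the morphism corresponding to a section $s\in\Gamma(V,P)$. The second is that $\fS$, being a $k$--scheme of finite type, has a Noetherian underlying space, so that \emph{every} open subset of $\fS$ is quasi--compact; this is what will let me replace arbitrary open covers by finite ones.

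I would first record two elementary properties of $\phi_s$. By naturality of $\sigma$ in the module, for $s\in\Gamma(V,N)$ one has $\pi\circ\phi_s=\phi_{\pi_V(s)}\colon\Oo_{\fS,V}\to M$. And on the stalk at a point $x\in V$ the morphism $\phi_s$ sends $1\in\Oo_{\fS,x}=(\Oo_{\fS,V})_x$ to $s_x$, so its image in $P_x$ is $\Oo_{\fS,x}\cdot s_x$ (while $(\Oo_{\fS,V})_x=0$ for $x\notin V$). Combining these: a family of morphisms $\Oo_{\fS,V}\to M$ of the shape $\phi_s$ indexed by pairs $(V,s)$ is jointly epimorphic precisely when, for each $x$, the germs $s_x$ (ranging over the indexing pairs with $x\in V$) generate $M_x$ as an $\Oo_{\fS,x}$--module; and since $M_x=\colim_{U\ni x}M(U)$, it suffices that for every open $U$, every $s\in M(U)$, and every $x\in U$, some indexing section has germ $s_x$ at $x$.

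With this in hand, for each pair $(U,s)$ with $U$ open and $s\in M(U)$ I would use that $\pi$ is an epimorphism of sheaves to produce an open cover of $U$ on whose members $s$ lifts along $\pi$, then pass to a finite subcover (quasi--compactness of $U$): say $U=U^{(U,s)}_1\cup\dots\cup U^{(U,s)}_{m(U,s)}$ with sections $n^{(U,s)}_i\in N(U^{(U,s)}_i)$ satisfying $\pi(n^{(U,s)}_i)=s|_{U^{(U,s)}_i}$. Let $I$ be the set of triples $\iota=(U,s,i)$ so obtained, and put $V_\iota:=U^{(U,s)}_i$ and $n_\iota:=n^{(U,s)}_i$. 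Then $\bigoplus_{\iota\in I}\phi_{n_\iota}\colon\bigoplus_{\iota\in I}\Oo_{\fS,V_\iota}\to N$ is a morphism whose composite with $\pi$ equals $\bigoplus_{\iota\in I}\phi_{\pi(n_\iota)}$, and the latter is an epimorphism by the criterion above: given $U$, $s\in M(U)$, $x\in U$, pick $i$ with $x\in U^{(U,s)}_i$, and then the $(U,s,i)$--summand already hits $s_x=\pi(n^{(U,s)}_i)_x$.

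The one step I expect to require genuine care is the cardinality count, and this is where all the hypotheses are spent. By Lemma \ref{lem:card} there are at most $\kappa$ open subsets of $\fS$; by assumption each $M(U)$ has at most $\kappa$ elements; and each pair $(U,s)$ contributes only finitely many triples. Hence $|I|\le\kappa\cdot\kappa\cdot\aleph_0=\kappa$, and $I\neq\emptyset$ (for instance $(\fS,0,1)\in I$), so there is a surjection $p\colon J\to I$. Setting $V_j:=V_{p(j)}$ and defining $\bigoplus_{j\in J}\Oo_{\fS,V_j}\to N$ by the family $(\phi_{n_{p(j)}})_{j\in J}$, the composite with $\pi$ is $\bigoplus_{j\in J}\phi_{\pi(n_{p(j)})}$, whose image is $\sum_{j\in J}\im\phi_{\pi(n_{p(j)})}=\sum_{\iota\in I}\im\phi_{\pi(n_\iota)}=M$ because $p$ is onto; so this composite is an epimorphism, as required. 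Apart from the bookkeeping, the write-up's only real task will be verifying the two properties of $\phi_s$ above (in particular the identity $\pi\circ\phi_s=\phi_{\pi_V(s)}$, which is just naturality of $\sigma$ in the module); the remaining sheaf-theoretic and set-theoretic steps are routine.
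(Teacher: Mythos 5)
Your proof is correct and follows essentially the same route as the paper: both revisit the argument of \cite{hart}*{II, Proposition 1.2}, lift sections of $M$ through $\pi$ via the correspondence $\sigma$ between sections and morphisms out of $\Oo_{\fS,V}$, and bound the resulting index set by $|\Op(\fS)|\cdot\kappa\leq\kappa$ using Lemma \ref{lem:card}. The only differences are organizational: the paper indexes by pairs $(V,s)$ with $s\in\im\pi_V$ and dismisses the surjectivity of the composite as ``easily verified'', whereas you make that verification explicit through the stalk criterion and finite subcovers (quasi--compactness is not actually needed for the count, but is harmless), and you also spell out the padding step of surjecting $J$ onto your index set, which the paper glosses over.
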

\begin{proof}
We have a commutative diagram of $\Oo_\fS$--modules
\[\xymatrix{
\bigoplus_{V\in\Op(\fS)}\bigoplus_{t\in N(V)}\Oo_{\fS,V}\ar[r]\ar@{->>}[d]& \bigoplus_{V\in\Op(\fS)}\bigoplus_{s\in M(V)}\Oo_{\fS,V}\ar@{->>}[d] \\
N\ar@{->>}[r]^\pi & M \\ }\]
where the right vertical arrow is induced by the isomorphisms $\hom_\fS(\Oo_{\fS,V},M)\cong M(V)$, and the left vertical arrow is obtained in an similar manner. Notice that the index set of the direct sum on the left may have cardinality greater than $\kappa$. For each $s\in\im(\pi_V:N(V)\to M(V))$, choose $t_s\in N(V)$ such that $\pi_V(t_s)=s$. Put $\tilde{N}(V):=\left\{t_s\in N(V):s\in \im \pi_V\right\}$ and observe that $|\tilde{N}(V)|\leq \kappa$. Let $\rho$ be the obvious morphism $\bigoplus_{V\in\Op(\fS)}\bigoplus_{t\in\tilde{N}(V)}\Oo_{\fS,V}\to N$. It is easily verified that $\pi\circ\rho$ is an epimorphism. Moreover, the source of $\rho$ is a direct sum over the set $S:=\coprod_{V\in\Op(\fS)}\tilde{N}(V)$ and we have $|S|\leq |\Op(\fS)|\cdot \kappa\leq\kappa\cdot\kappa=\kappa$.
\end{proof}

From now on we fix a set $J$ of cardinality $\kappa$.

\subsection{The dg--category $\Perf_\fS$.} Let $\scrD_\fS$ be the full subcategory of $\Mod(\fS)$ whose objects are the modules $\bigoplus_{j\in J}\Oo_{\fS,V_j}$ for all possible functions $V_?:J\to \Op(\fS)$. It is a small category, since its objects are in bijection with the set of functions $J\to \Op(\fS)$. Moreover it is an additive category; indeed, we can form the direct sum of two objects of $\scrD_\fS$ using a bijection $J\cong J\coprod J$. We now proceed to construct the dg--category $\Perf_\fS$; we start by defining a dg--category $\perf_\fS$. The objects of $\perf_\fS$ are the perfect (strictly) bounded above cochain complexes in $\scrD_\fS$. For two of such complexes $E$ and $F$, let $\perf_\fS(E,F)^n$ be the $k$--module of families $\varphi=(\varphi^p)_{p\in \Z}$ of morphisms $\varphi^p:E^p\to E^{p+n}$. Let $\perf_\fS(E,F)$ be the cochain complex of $k$--modules with components $\perf_\fS(E,F)^n$ and differential given by the usual formula:
\[d(\varphi)=d_F\circ \varphi-(-1)^n\varphi\circ d_E.\]
Notice that the dg--category $\perf_\fS$ is small because $\scrD_\fS$ is. Let $\ac_\fS$ be the full dg--subcategory of acyclic complexes of $\perf_\fS$ and define $\Perf_\fS$ to be the Drinfeld quotient $\perf_\fS/\ac_\fS$ \cite{drin}*{3.1}. Notice that this Drinfeld quotient is well defined since $\ac_\fS$ is a small dg-category.

\begin{defi}Let $T$ be a triangulated category. A \emph{dg--enhancement} of $T$ is a strongly pretriangulated dg--category $\mathcal{T}$ \cite{drin}*{2.4} such that $H^0(\mathcal{T})$ is triangle equivalent to $T$.\end{defi}

\begin{lem}
$\Perf_\fS$ is a dg--enhancement of the derived category $\Dperf(\fS)$ of perfect complexes on $\fS$.
\end{lem}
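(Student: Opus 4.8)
The plan is to establish separately that $\Perf_\fS$ is strongly pretriangulated and that $H^0(\Perf_\fS)$ is triangle equivalent to $\Dperf(\fS)$. For the first, observe that since $\scrD_\fS$ is closed under finite direct sums, the dg--category $\perf_\fS$ is closed under shifts and under cones of closed degree--zero morphisms: the cone of a chain map between bounded above complexes is again bounded above, its components lie in $\scrD_\fS$, and it is perfect because the perfect complexes form a triangulated subcategory of $D(\Mod(\fS))$. A full dg--subcategory of the dg--category of complexes over an additive category with these two closure properties is strongly pretriangulated, so $\perf_\fS$ is, and the Drinfeld quotient of a strongly pretriangulated dg--category is again strongly pretriangulated \cite{drin}; hence $\Perf_\fS=\perf_\fS/\ac_\fS$ is strongly pretriangulated. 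Moreover $\ac_\fS$ is itself closed under shifts and cones, so $H^0(\ac_\fS)$ is a triangulated subcategory of $H^0(\perf_\fS)$, and by Drinfeld's description of the quotient \cite{drin} the category $H^0(\Perf_\fS)$ is the Verdier quotient $H^0(\perf_\fS)/H^0(\ac_\fS)$. Since $\ac_\fS$ consists of all acyclic objects of $\perf_\fS$, this Verdier quotient is exactly the localization of $H^0(\perf_\fS)$ at the class of quasi--isomorphisms.

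Viewing a complex in $\scrD_\fS$ as a complex of $\Oo_\fS$--modules defines an exact functor $H^0(\perf_\fS)\to\Dperf(\fS)$ that inverts quasi--isomorphisms, hence an exact functor $\Phi\colon H^0(\Perf_\fS)\to\Dperf(\fS)$. I would prove first that $\Phi$ is essentially surjective. Let $P$ be a perfect complex on $\fS$. As $\fS$ is noetherian, $P$ has bounded coherent cohomology and is therefore quasi--isomorphic to a bounded above complex $Q$ of coherent sheaves; moreover $|Q^n(U)|\leq\kappa$ for all $n$ and all $U\in\Op(\fS)$, since the modules of sections of a coherent sheaf on a $k$--scheme of finite type have cardinality at most $\kappa$. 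One then builds, by descending induction on the degree, a bounded above complex $E$ with components in $\scrD_\fS$ and a quasi--isomorphism $E\to Q$: at each step Lemma~\ref{lem:keycondition} (which rests on Lemma~\ref{lem:card}) provides the next component of $E$ surjecting onto the relevant module of cocycles assembled from $Q$ and the already--constructed $E^j$, $j>n$, and the cardinality bound is preserved because kernels, images and finite direct sums of modules with at most $\kappa$ sections, as well as the $\scrD_\fS$--objects themselves, have at most $\kappa$ sections over every open subset. Being quasi--isomorphic to $P$, the complex $E$ is perfect, so $E\in\perf_\fS$ and $\Phi(E)\simeq P$.

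It remains to prove that $\Phi$ is fully faithful, and this is the step I expect to be the main obstacle. As $\Phi$ is exact and $\perf_\fS$ is closed under shifts, it suffices to show that $\hom_{H^0(\Perf_\fS)}(E,F)\to\hom_{\Dperf(\fS)}(E,F)$ is bijective for $E,F\in\perf_\fS$; by the previous step the left--hand side is the filtered colimit of the groups $\hom_{K(\Mod(\fS))}(E',F)$ over quasi--isomorphisms $E'\to E$ in $\perf_\fS$. The clean input is model--independent: for an arbitrary complex, flat resolutions are cofinal among quasi--isomorphisms into it, so that the localization at quasi--isomorphisms of the homotopy category of bounded above complexes of flat $\Oo_\fS$--modules is fully faithful in $D(\Mod(\fS))$, and restricting to perfect objects identifies this localization with $\Dperf(\fS)$. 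The real work is to reconcile this with the smallness constraint encoded in $\scrD_\fS$: the index set $J$ has been fixed of cardinality $\kappa$, so Lemma~\ref{lem:keycondition} resolves only complexes whose modules of sections have cardinality at most $\kappa$, and one must show that resolutions of this controlled size --- obtained through Lemmas~\ref{lem:card} and~\ref{lem:keycondition} and using the bounded Tor--amplitude of perfect complexes --- still compute the colimit above. A natural way to organise this is to treat the affine case first, where $\scrD_\fS$ contains enough free modules, and then to globalise along a finite affine cover using \v{C}ech descent for $\Dperf$ and for the Hom--complexes underlying $\Perf_?$; I expect this cardinality bookkeeping (equivalently, the descent) to be the genuinely delicate point of the proof.
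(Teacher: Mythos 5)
Your overall decomposition --- $\Perf_\fS$ is strongly pretriangulated, $H^0(\Perf_\fS)$ is the Verdier quotient $H^0(\perf_\fS)/H^0(\ac_\fS)$ by Drinfeld's theorem, and that quotient is equivalent to $\Dperf(\fS)$ --- matches the paper, and your treatment of the first two points and of essential surjectivity is sound (your descending induction using Lemma \ref{lem:keycondition} is exactly what \cite{tt}*{Lemma 1.9.5} packages, and is how Lemma \ref{lem:resolution} is proved). The genuine gap is full faithfulness: you correctly isolate it as the remaining step, but you do not prove it. You only conjecture a strategy --- treat the affine case, then globalise by \v{C}ech descent for $\Dperf$ and for the Hom--complexes --- and explicitly defer the ``delicate cardinality bookkeeping''. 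As it stands this is a missing proof, and the proposed route is moreover a detour: no descent or affine reduction is needed.

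The point you are missing is that full faithfulness is formal once the resolution statement is phrased for a class closed under quasi--isomorphism, which is exactly how Lemma \ref{lem:resolution} is set up. The class $\scrC$ of cohomologically bounded above complexes $E$ with $|H^n(E)(U)|\leq\kappa$ for all $n$ and $U$ is stable under quasi--isomorphism, contains every bounded above complex with components in $\scrD_\fS$ (each $H^n$ is a subquotient of an object of $\scrD_\fS$), and every object of $\scrC$ receives a quasi--isomorphism from a bounded above complex in $\scrD_\fS$. Consequently, for $E\in\perf_\fS$ the quasi--isomorphisms $E'\to E$ with $E'$ a bounded above complex in $\scrD_\fS$ are cofinal among \emph{all} quasi--isomorphisms of complexes of $\Oo_\fS$--modules into $E$, and the calculus--of--fractions descriptions of $\hom$ in $H^0(\perf_\fS)/H^0(\ac_\fS)$ and in $D(\Mod(\fS))$ then agree on the nose. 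This is precisely the argument of \cite{tt}*{3.5.3}, which is why the paper reduces the entire second claim to Lemma \ref{lem:resolution} together with Remark \ref{rem:perfect}; the only cardinality input required is the one you already invoked for essential surjectivity.
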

\begin{proof}
First observe that the dg--category $\perf_\fS$ is strongly pretriangulated; this is because the shifts of objects of $\perf_\fS$ and the cones of morphisms of complexes between objects of $\perf_\fS$ are isomorphic to objects of $\perf_\fS$. Note that $\ac_\fS$ is strongly pretriangulated as well, since the shifts of acyclic complexes are acyclic and the cones of morphisms of complexes between acyclic complexes are acyclic too. Hence $\Perf_\fS$ is a strongly pretriangulated dg--category. Moreover, we have equivalences of triangulated categories
\[H^0(\Perf_\fS)\cong H^0(\perf_\fS)/H^0(\ac_\fS)\cong \Dperf(\fS).\]
Here the equivalence on the left follows from \cite{drin}*{Theorem 3.4} and it remains to prove the equivalence on the right. By \cite{tt}*{1.9.7}, to show that $H^0(\perf_\fS)/H^0(\ac_\fS)\cong \Dperf(\fS)$ it suffices to prove the following: for every perfect complex of $\Oo_\fS$--modules $E$ there exists a bounded above complex $F$ of modules in $\scrD_\fS$ and a quasi--isomorphism $F\overset{\sim}\to E$. The existence of such a quasi-isomorphism is proved in Lemma \ref{lem:resolution}, see Remark \ref{rem:perfect}.
\end{proof}

\begin{lem}\label{lem:card}
Let $M$ be an $\Oo_\fS$--module which is either
\begin{enumerate}
\item a coherent $\Oo_\fS$--module, or
\item a subquotient of an object of $\scrD_\fS$.
\end{enumerate}
Then $|M(U)|\leq \kappa$ for all $U\in\Op(\fS)$.
\end{lem}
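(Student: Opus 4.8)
The plan is to reduce everything to the basic estimate that $|\Gamma(W,\Oo_\fS)|\le\kappa$ for every open $W\subseteq\fS$. Since $W$ is again of finite type over $k$, it is covered by finitely many affine opens $\Spec A_i$ with $A_i$ a finitely generated $k$-algebra, hence $|A_i|\le\kappa$, and the sheaf axiom embeds $\Gamma(W,\Oo_\fS)$ into $\prod_iA_i$. We shall use freely that $\fS$, being of finite type over a field, is Noetherian, so every open of $\fS$ is quasi-compact; and that there are at most $\kappa$ open subsets of $\fS$, a fortiori at most $\kappa$ affine opens. Case (1) now follows by the same argument: if $M$ is coherent, cover $U$ by finitely many affine opens $U_i=\Spec A_i$; then $M|_{U_i}$ is the sheaf associated to a finitely generated $A_i$-module $N_i$, so $|M(U_i)|=|N_i|\le\max(|A_i|,\aleph_0)\le\kappa$, and $M(U)\hookrightarrow\prod_iM(U_i)$ gives $|M(U)|\le\kappa$.

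For case (2), write $M=N_1/N_2$ with $N_2\subseteq N_1\subseteq D$ for some $D=\bigoplus_{j\in J}\Oo_{\fS,V_j}\in\scrD_\fS$. Since $N_{2,x}\subseteq N_{1,x}\subseteq D_x$ at every point $x$, the inclusion $N_1\hookrightarrow D$ induces a monomorphism of sheaves $M\hookrightarrow D/N_2$, so it suffices to bound $|(D/N)(U)|$ for $N:=N_2$ an arbitrary subsheaf of $D$. The key preliminary bound is that $|D(W)|\le\kappa$ for every open $W$: each $\Oo_{\fS,V_j}=(i_{V_j})_!\Oo_{V_j}$ is a subsheaf of $(i_{V_j})_*\Oo_{V_j}$, so $\Oo_{\fS,V_j}(W)$ embeds into $\Gamma(W\cap V_j,\Oo_\fS)$, which has cardinality $\le\kappa$ by the estimate above; since $W$ is quasi-compact, sections over $W$ commute with direct sums of sheaves, so $D(W)=\bigoplus_{j\in J}\Oo_{\fS,V_j}(W)$, and a direct sum of $\kappa$ sets of size $\le\kappa$ has cardinality $\le\kappa$.

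Now I estimate $|(D/N)(U)|$. Given $s\in(D/N)(U)$, since $D/N$ is the sheafification of $W\mapsto D(W)/N(W)$ there is an open cover of $U$ over whose members $s$ lifts to a section of $D$; refining to affine opens and using quasi-compactness of $U$, there are a finite cover $U_1,\dots,U_m$ of $U$ by affine opens of $\fS$ and sections $\sigma_i\in D(U_i)$ with $\sigma_i|_{U_i\cap U_j}-\sigma_j|_{U_i\cap U_j}\in N(U_i\cap U_j)$ such that $s$ is obtained by gluing the classes $[\sigma_i]$. Thus $(\{U_i\}_{i=1}^m,(\sigma_i)_i)\mapsto s$ is a surjection onto $(D/N)(U)$, and its domain has cardinality at most $\kappa$: there are at most $\kappa$ finite families of affine opens of $\fS$, and for each such family at most $\prod_i|D(U_i)|\le\kappa^m=\kappa$ admissible tuples $(\sigma_i)$. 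Hence $|(D/N)(U)|\le\kappa$, and therefore $|M(U)|\le\kappa$.

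The main obstacle is precisely the passage to the quotient: because $\Gamma(U,-)$ is only left exact, $M(U)$ can be strictly larger than the sections over $U$ of any subsheaf of $D$, so one cannot simply invoke $|D(U)|\le\kappa$. The remedy is the cover-counting argument above, whose viability rests on two finiteness facts — $U$ is quasi-compact, so only finite covers are needed, and these may be taken inside $\Op(\fS)$, which already has cardinality at most $\kappa$.
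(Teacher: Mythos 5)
Your proof is correct and follows essentially the same route as the paper's: the decisive step --- bounding the sections of the quotient sheaf by counting finite covers drawn from $\Op(\fS)$ together with compatible tuples of lifted sections --- is precisely the paper's argument that sheafification preserves the cardinality bound $\kappa$, and the coherent case and the direct-sum bound are likewise identical in substance. The only genuine (and pleasant) variation is that you bound $|\Oo_{\fS,V}(W)|$ by embedding $(i_V)_!\Oo_V$ into $(i_V)_*\Oo_V$ and using $\Gamma(W\cap V,\Oo_\fS)$, rather than applying the sheafification count to the extension-by-zero presheaf as the paper does; this shortens that step but does not change the overall strategy.
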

\begin{proof}
Suppose first that $M$ is a coherent $\Oo_\fS$--module. Since $\fS$ is Noetherian, every open subset of $\fS$ is quasi-compact, and we can write $U$ as the union of a finite number of affine open subsets $\Spec A_i$ ($1\leq i\leq k$). Notice that $A_i$ has cardinality at most $\kappa$ since it is a $k$--algebra of finite type. Notice also that $M(\Spec A_i)$ has cardinality at most $\kappa$ since it is a finite $A_i$--module. It follows that $|M(U)|\leq \kappa$ since we have an injection $M(U)\to \prod_{i=1}^k M(\Spec A_i)$.

Now let $N$ be a presheaf of abelian groups on $\fS$ and let $aN$ be its associated sheaf. Suppose that $|N(U)|\leq \kappa$ for every open set $U$. We will show that $|aN(U)|\leq\kappa$ for every open set $U$. For every $s\in aN(U)$ there exist a finite open cover $U=W_1\cup\cdots\cup W_k$ and sections $s_i\in N(W_i)$ such that $s|_{W_i}=s_i$ for all $i$. This implies that there is a surjection
\[\coprod_{F\in\Pf(\Op(U))}\left[ \lim_{W\in F} N(W)\right]\to aN(U)\]
where $\Pf(?)$ is the set of finite subsets of $?$, and $\lim_{W\in F}N(W)$ is the subset of the product $\prod_{W\in F}N(W)$ formed by those tuples of sections which agree on double intersections. It follows that $|aN(U)|\leq \kappa$ since $|\Pf(\Op(U))|=|\Op(U)|\leq\kappa$.

As a consequence of the previous paragraphs, we see that $|\Oo_{\fS,V}(U)|\leq \kappa$ for all $U,V\in\Op(\fS)$. Indeed, $\Oo_{\fS,V}$ is the sheaf associated to the abelian presheaf
\[U\mapsto \left\{\begin{array}{ll}\Oo_\fS(U) & \mbox{if }U\subseteq V\mbox{,} \\
0 & \mbox{if not.}\\ \end{array}\right.\]

Now let $M$ be an object of $\scrD_\fS$, that is, $M=\bigoplus_{j\in J} \Oo_{\fS,V_j}$. Let $U$ be an open subset of $\fS$. Since elements of a direct sum have finite support, there is a surjection
\[\coprod_{F\in\Pf(J)}\left[ \bigoplus_{j\in F} \Oo_{\fS, V_j}(U)\right]\to M(U).\]
It follows that $|M(U)|\leq\kappa$ since $|\Pf(J)|=|J|=\kappa$.

Suppose now that $M$ is a submodule of an object of $\scrD_\fS$. It is clear from the previous paragraph that $|M(U)|\leq \kappa$ for every open $U\in\Op(\fS)$. Finally, if $N$ is a submodule of such an $M$, it follows that $|(M/N)(U)|\leq\kappa$ for every open $U$, since $M/N$ is the sheaf associated to the presheaf $U\mapsto M(U)/N(U)$.
\end{proof}

We say that a cochain complex of $\Oo_\fS$--modules $E$ is \emph{cohomologically bounded above} if there exists a natural number $N$ such that $H^n(E)=0$ for all $n\geq N$.

\begin{lem}\label{lem:resolution}
Let $E$ be a cohomologically bounded above complex of $\Oo_\fS$--modules such that $|H^n(E)(U)|\leq\kappa$ for all $n$ and all $U\in\Op(\fS)$. (For example, $E$ can be a perfect complex of $\Oo_\fS$--modules, as explained in Remark \ref{rem:perfect}.) Then there exists a bounded above complex of modules in $\scrD_\fS$, say $F$, and a quasi-isomorphism $F\overset{\sim}\to E$.
\end{lem}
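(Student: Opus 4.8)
The plan is to build $F$ and the quasi-isomorphism $F\to E$ by a descending induction on degrees, of the kind used to construct resolutions of complexes, Lemma \ref{lem:keycondition} supplying the required object of $\scrD_\fS$ at each stage. Fix an integer $n_0$ with $H^n(E)=0$ for $n\geq n_0$. By descending induction on $n$ I will construct, for every integer $n$, an object $F^n\in\scrD_\fS$ and morphisms $d^n\colon F^n\to F^{n+1}$ and $\phi^n\colon F^n\to E^n$, with $F^n=0$ and $d^n=\phi^n=0$ for $n\geq n_0$ and with $d_F^{n+1}\circ d^n=0$, such that the following hypothesis $(\star_n)$ holds: writing $G_n$ for the brutally truncated complex $(\cdots\to 0\to F^n\xrightarrow{d^n}F^{n+1}\to\cdots)$ of $F$ in degrees $\geq n$, the family $(\phi^j)_{j\geq n}$ is a morphism of complexes $\phi_{\geq n}\colon G_n\to E$ and $H^j(\phi_{\geq n})$ is an isomorphism for $j>n$ and an epimorphism for $j=n$. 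Granting this, set $F:=(\cdots\to F^n\xrightarrow{d^n}F^{n+1}\to\cdots\to F^{n_0-1}\to 0\to\cdots)$ and $\phi:=(\phi^j)_j\colon F\to E$. Then $F$ is a bounded above complex of objects of $\scrD_\fS$, and since $H^j(F)$, $H^j(E)$ and the induced map depend only on degrees $j-1,j,j+1$, for any integer $n<j$ with $n\leq n_0$ we have $H^j(\phi)=H^j(\phi_{\geq n})$, an isomorphism by $(\star_n)$; hence $\phi$ is a quasi-isomorphism.

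The base case $n=n_0$ is immediate: $G_{n_0}=0$ and $\phi_{\geq n_0}$ is the zero map $0\to E$, which is an isomorphism on $H^j$ for $j>n_0$ and an epimorphism on $H^{n_0}$ because $H^j(E)=0$ for $j\geq n_0$. For the inductive step, assume $(\star_{n+1})$ and let $C:=\mathrm{cone}(\phi_{\geq n+1})$, with distinguished triangle $G_{n+1}\to E\to C\to G_{n+1}[1]$. From the long exact cohomology sequence and $(\star_{n+1})$ one reads off that $H^j(C)=0$ for $j\geq n+1$; and since $G_{n+1}$ lives in degrees $\geq n+1$ we have $H^n(G_{n+1})=0$, so the same sequence yields a short exact sequence $0\to H^n(E)\to H^n(C)\to H'\to 0$ with $H'$ a submodule of $H^{n+1}(G_{n+1})=\ker(d_F^{n+1})\subseteq F^{n+1}$. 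Since $F^{n+1}\in\scrD_\fS$, Lemma \ref{lem:card} bounds the cardinality of the sections of $H'$, and the hypothesis on $E$ bounds those of $H^n(E)$; hence $|H^n(C)(U)|\leq\kappa$ for all $U\in\Op(\fS)$. This is the crux of the argument: the module $C^n=F^{n+1}\oplus E^n$, and with it the cocycle module $Z^n(C)=\ker(d_C\colon C^n\to C^{n+1})$, may have sections of cardinality larger than $\kappa$ because $E^n$ does, so $Z^n(C)$ cannot be covered directly by an object of $\scrD_\fS$; but its quotient $H^n(C)=Z^n(C)/B^n(C)$ is small, which is exactly what Lemma \ref{lem:keycondition} requires.

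Apply Lemma \ref{lem:keycondition}, with the fixed set $J$, to the canonical surjection $\pi\colon Z^n(C)\twoheadrightarrow H^n(C)$: this produces a function $V_?\colon J\to\Op(\fS)$ and a morphism $\iota\colon F^n:=\bigoplus_{j\in J}\Oo_{\fS,V_j}\to Z^n(C)\subseteq C^n$ with $\pi\circ\iota$ an epimorphism, and $F^n\in\scrD_\fS$. Since $\iota$ lands in $\ker d_C$ it is a morphism of complexes $\iota\colon F^n[-n]\to C$, where $F^n[-n]$ is $F^n$ placed in degree $n$; writing $\iota$ in the two coordinates of $C^n=F^{n+1}\oplus E^n$ gives, up to signs, maps $d^n\colon F^n\to F^{n+1}$ and $\phi^n\colon F^n\to E^n$, and the equation $d_C\circ\iota=0$ encodes $d_F^{n+1}\circ d^n=0$ together with the relation turning $(\phi^j)_{j\geq n}$ into a morphism $\phi_{\geq n}\colon G_n\to E$. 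Now $G_{n+1}$ is a subcomplex of $G_n$ with quotient $F^n[-n]$, and a direct inspection of mapping cones shows that $C$ is a subcomplex of $\mathrm{cone}(\phi_{\geq n})$ with quotient $F^n[-n][1]$; the resulting triangle, after rotation, reads $F^n[-n]\xrightarrow{\pm\iota}C\to\mathrm{cone}(\phi_{\geq n})\to F^n[-n][1]$. Its long exact cohomology sequence, together with $H^j(C)=0$ for $j\geq n+1$ and the surjectivity of $H^n(\pm\iota)=\pm\pi\circ\iota$ (under the identification $H^n(F^n[-n])=F^n$), yields $H^j(\mathrm{cone}(\phi_{\geq n}))=0$ for all $j\geq n$; rewriting this through the triangle $G_n\to E\to\mathrm{cone}(\phi_{\geq n})\to G_n[1]$ gives exactly $(\star_n)$, completing the induction.

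The only genuine obstacle is the one isolated in the second paragraph: since the hypothesis bounds the cardinality of the sections of the \emph{cohomology} of $E$ rather than of $E$ itself, the classical resolution-of-a-complex argument — which would cover the cocycle sheaves of the mapping cones — cannot be run verbatim; instead one must cover the \emph{cohomology} of each mapping cone, which by construction is an extension of a piece of $H^\ast(E)$ by a submodule of a previously chosen object of $\scrD_\fS$, hence small by Lemma \ref{lem:card} and the hypothesis. Everything else — the identification of the mapping cones, the signs, and the long exact sequences — is routine.
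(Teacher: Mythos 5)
Your proof is correct and follows essentially the same route as the paper: the paper simply invokes Thomason--Trobaugh \cite{tt}*{Lemma 1.9.5} after checking its hypotheses (closure of the class of complexes with $\kappa$--small cohomology under cones and quasi-isomorphism, plus the key condition supplied by Lemma \ref{lem:keycondition}), and your descending induction covering the cocycles of the successive mapping cones is precisely the argument inside that cited lemma, here written out in full. In particular your crucial observation --- that one must cover $H^n(C)$ rather than $Z^n(C)$, and that $H^n(C)$ is $\kappa$--small because it is an extension of $H^n(E)$ by a submodule of $F^{n+1}\in\scrD_\fS$ (Lemma \ref{lem:card}(2)) --- is exactly why the key condition is formulated for an epimorphism $N\twoheadrightarrow M$ with only $M$ assumed small.
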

\begin{proof}
It will follow directly from \cite{tt}*{Lemma 1.9.5}. Let $\scrC$ be the full subcategory of the category of cochain complexes of $\Oo_\fS$--modules whose objects are those cohomologically bounded above complexes $E$ such that $|H^n(E)(U)|\leq \kappa$ for all $n$ and all $U\in\Op(\fS)$. Notice that any complex of $\Oo_\fS$--modules quasi-isomorphic to a complex in $\scrC$ is itself in $\scrC$. Notice also that any strict bounded complex in $\scrD_\fS$ is in $\scrC$ by Lemma \ref{lem:card} (2). It is easily verified that $\scrC$ contains the mapping cone of any map of complexes $F\to E$ with $E\in \scrC$ and $F$ a strict bounded complex in $\scrD_\fS$. The \emph{key condition} \cite{tt}*{1.9.5.1} holds by Lemma \ref{lem:keycondition}. The statement now follows from \cite{tt}*{Lemma 1.9.5}.
\end{proof}

\begin{rem}\label{rem:perfect}
In the statement of Lemma \ref{lem:resolution} we can take $E$ to be any perfect complex of $\Oo_\fS$--modules. Indeed, any perfect complex has coherent cohomology groups since $\fS$ is Noetherian, and so satisfies the hypothesis of Lemma \ref{lem:resolution} by Lemma \ref{lem:card} (1).
\end{rem}

\subsection{Functoriality of $\Perf_?$} Let $f:\fT\to\fS$ be a morphism in $\sch$. The functor $f^\star:\scrF_\fS\to\scrF_\fT$ extends to a $k$--linear functor $f^\star:\scrD_\fS\to\scrD_\fT$ in a natural way, as we proceed to explain. To define $f^\star$ on the objects of $\scrD_\fS$ put
\[ f^\star\left(\bigoplus_{j\in J}\Oo_{\fS,V_j}\right):=\bigoplus_{j\in J}f^\star\Oo_{\fS,V_j}.\]
Now observe that the functors $\hom_\fS(\Oo_{\fS,V},?)$ commute with direct sums and define $f^\star$ on morphisms by the dashed arrow that makes the following diagram commute.
\[\xymatrix{
\hom_\fS\left(\bigoplus_j\Oo_{\fS,V_j},\bigoplus_k\Oo_{\fS,W_k}\right)\ar[r]^\cong\ar@{-->}[d]& \prod_j\bigoplus_k\hom_\fS(\Oo_{\fS,V_j},\Oo_{\fS, W_k})\ar[d]^{\prod_j\bigoplus_k f^\star} \\
\hom_\fT\left(\bigoplus_jf^\star\Oo_{\fS,V_j},\bigoplus_kf^\star\Oo_{\fS,W_k}\right)\ar[r]^\cong & \prod_j\bigoplus_k\hom_\fT(f^\star\Oo_{\fS,V_j},f^\star\Oo_{\fS, W_k}) \\ }\]
Now that we have defined a $k$--linear functor $f^\star:\scrD_\fS\to\scrD_\fT$, we apply it degreewise to obtain a dg--functor $f^\star:\perf_\fS\to\perf_\fT$. Finally, this dg--functor takes objects of $\ac_\fS$ to objects of $\ac_\fT$ \cite{lipman}*{Proposition 2.7.2, Example 2.7.3}, and so it induces a dg--functor $f^\star:\Perf_\fS\to\Perf_\fT$. It is straightforward from the above definition that if $f$ and $g$ are two composable morphisms in $\sch$, then the dg--functor $g^\star\circ f^\star$ equals $(fg)^\star$. Hence, we have constructed a presheaf of dg--categories $\Perf_?:\sch^\op\to\dgcat$.

\begin{rem}
It is straightforward to see that the isomorphisms
\[f^*\bigoplus_j\Oo_{\fS,V_j}\cong \bigoplus_jf^*\Oo_{\fS,V_j}\overset{\bigoplus \theta_{f,V_j}}\longrightarrow \bigoplus_j\Oo_{\fT,f^{-1}V_j}=f^\star\bigoplus_j\Oo_{\fS,V_j}\]
assemble into a natural isomorphism between the restriction of $f^*:\Mod(\fS)\to\Mod(\fT)$ to $\scrD_\fS$ and the composite functor $f^\star:\scrD_\fS\to\scrD_\fT\subseteq \Mod(\fT)$. Hence the following diagram commutes up to natural isomorphism of functors.
\[\xymatrix{
H^0(\Perf_\fS)\ar[r]^\cong\ar[d]_{H^0(f^\star)}& \Dperf(\fS)\ar[d]^{\L f^*} \\
H^0(\Perf_\fT)\ar[r]^\cong & \Dperf(\fT) \\ }\]
\end{rem}

\begin{ack}
The author wishes to thank Guillermo Corti\~nas for his useful suggestions regarding these notes.
\end{ack}

\begin{bibdiv}
\begin{biblist}

\bib{chsw}{article}{
   author={Corti{\~n}as, Guillermo},
   author={Haesemeyer, Christian},
   author={Schlichting, Marco},
   author={Weibel, Charles},
   title={Cyclic homology, cdh-cohomology and negative $K$-theory},
   journal={Ann. of Math. (2)},
   volume={167},
   date={2008},
   number={2},
   pages={549--573},
}

\bib{drin}{article}{
   author={Drinfeld, Vladimir},
   title={DG quotients of DG categories},
   journal={J. Algebra},
   volume={272},
   date={2004},
   number={2},
   pages={643--691},
}

\bib{hart}{book}{
   author={Hartshorne, Robin},
   title={Residues and duality},
   series={Lecture notes of a seminar on the work of A. Grothendieck, given
   at Harvard 1963/64. With an appendix by P. Deligne. Lecture Notes in
   Mathematics, No. 20},
   publisher={Springer-Verlag, Berlin-New York},
   date={1966},
   pages={vii+423},
   review={\MR{0222093 (36 \#5145)}},
}

\bib{kellericm}{article}{
   author={Keller, Bernhard},
   title={On differential graded categories},
   conference={
      title={International Congress of Mathematicians. Vol. II},
   },
   book={
      publisher={Eur. Math. Soc., Z\"urich},
   },
   date={2006},
   pages={151--190},
}

\bib{lipman}{article}{
   author={Lipman, Joseph},
   title={Notes on derived functors and Grothendieck duality},
   conference={
      title={Foundations of Grothendieck duality for diagrams of schemes},
   },
   book={
      series={Lecture Notes in Math.},
      volume={1960},
      publisher={Springer, Berlin},
   },
   date={2009},
   pages={1--259},
   review={\MR{2490557 (2011d:14029)}},
   doi={10.1007/978-3-540-85420-3},
}

\bib{tt}{article}{
   author={Thomason, R. W.},
   author={Trobaugh, Thomas},
   title={Higher algebraic $K$-theory of schemes and of derived categories},
   conference={
      title={The Grothendieck Festschrift, Vol.\ III},
   },
   book={
      series={Progr. Math.},
      volume={88},
      publisher={Birkh\"auser Boston},
      place={Boston, MA},
   },
   date={1990},
   pages={247--435},
   review={\MR{1106918 (92f:19001)}},
}

\end{biblist}
\end{bibdiv}

\end{document}